\makeatletter\@ifundefined{pdfpagewidth}{}{\pdfpagewidth=21.0cm\pdfpageheight=29.7cm}\makeatother
\let\orig@item=\@item \def\@item[#1]{\orig@item[\rm #1]}
\renewenvironment{abstract}{\begin{quote}\footnotesize\textbf{\abstractname.}}{\end{quote}\bigskip}
\renewcommand\@seccntformat[1]{\csname the#1\endcsname.\enspace}
\renewcommand\paragraph{\@startsection{paragraph}{4}{\z@}{1\baselineskip}{-0.5em}{\normalsize\bfseries}}
\newcommand\secparagraph{\@startsection{subsection}{2}{\z@}{1\baselineskip}{-0.5em}{\normalsize\bfseries}}
\let\origcaption=\caption \renewcommand\caption[1]{\parbox{0.66\textwidth}{\origcaption{#1}}}
\renewcommand\@begintheorem[2]{\trivlist\item[\hskip\labelsep{\bfseries#1 #2.}]\it}
\renewcommand\@opargbegintheorem[3]{\trivlist\item[\hskip\labelsep{\bfseries#1 #2}] {\bfseries(#3).}\enspace\it\ignorespaces}
\newtheorem{satz}{Satz}[section]
\makeatletter\@addtoreset{equation}{satz}\makeatother
\newtheorem{proposition}[satz]{Proposition}
\newtheorem{example}[satz]{Example}
\newenvironment{definition}[1][\bf Definition]{\trivlist\item[\hskip\labelsep{\it #1.}]}{\bigskip\\}
\newenvironment{acknowledgements}[1][\it Acknowledgements]{\trivlist\item[\hskip\labelsep{\it #1.}]}{\bigskip\\}
\newenvironment{remark}[1][\bf Remark]{\trivlist\item[\hskip\labelsep{\it #1.}]}{\bigskip\\}
\newtheorem{introtheorem}{Theorem}
\newenvironment{proof}[1][Proof]{\trivlist\item[\hskip\labelsep{\it #1.}]}{\hspace*{\fill}$\Box$\endtrivlist}
\newcommand\grant[1]{{\renewcommand\thefootnote{}\footnotetext{#1.}}}
\newcommand\subjclass[1]{{\renewcommand\thefootnote{}\footnotetext{2010 \textit{Mathematics Subject Classification:} #1.}}}
\newcommand\keywords[1]{{\renewcommand\thefootnote{}\footnotetext{\textit{Keywords.} #1.}}}
\newcommand\engqq[1]{``#1''}
\renewcommand\ge{\geqslant}  % from amssymb
\renewcommand\le{\leqslant}  % from amssymb
\renewcommand\epsilon{\varepsilon}
\renewcommand\phi{\varphi}
\renewcommand\subset{\subseteq}
\renewcommand\supset{\supseteq}
\renewcommand\tilde{\widetilde}
\renewcommand\P{\mathbb P}
\newcommand\be{\begingroup\arraycolsep=0.13888em\begin{eqnarray*}}
\newcommand\ee{\end{eqnarray*}\endgroup}
\newcommand\set[1]{\left\{#1\right\}}
\newcommand\with{\ \vrule\ }
\newlength\matrcolsep \matrcolsep=\arraycolsep
\newcommand\tline{\noalign{\vskip0.4ex}\hline\noalign{\vskip0.65ex}}
\newcommand\newop[2]{\newcommand#1{\mathop{\rm #2}\nolimits}}
\newcommand\Q{\mathbb Q}
\newcommand\R{\mathbb R}
\newcommand\Z{\mathbb Z}
\newcommand\PtP{\P^1\times\P^1}
\newop{\dv}{div}
\newop{\id}{id}
\newop{\ord}{ord}
\newop{\Div}{Div}
\newop{\Face}{Face}
\newop{\Vol}{Vol}
\newop{\Neg}{Neg}
\newop{\Cox}{Cox}
\newop{\Null}{Null}
\newop{\Nef}{Nef}
\newop{\supp}{supp}
\newop{\NE}{\overline{{NE}}}
\newop{\Eff}{\overline{{Eff}}}
\newop{\valnu}{\nu_{Y_\bullet}}
\newop\NS{NS} 
\newop\vol{vol}
\newop\BigCone{Big}
\begin{document}

\title{Minkowski decomposition of Okounkov bodies on surfaces}
\author{Patrycja \L{}uszcz-\'Swidecka and David Schmitz}
\date{}
\maketitle
\thispagestyle{empty}

\subjclass{Primary 14C20; Secondary 14J26}
\keywords{Okounkov body, Minkowski decomposition, Zariski chamber}
\grant{The second author was supported by DFG grant BA 1559/6-1}

\begin{abstract}
	We prove that the Okounkov body of a big divisor with respect to 
	a general flag on a smooth projective surface whose pseudo-effective cone is rational 
	polyhedral decomposes as the Minkowski sum of finitely many simplices and line segments
	arising as Okounkov bodies of nef divisors.
%	We prove the existence of a Minkowski basis with respect to a 
%	generic flag on a smooth projective surface 
%	with rational polyhedral big cone, i.e., 
%	of a finite set of nef divisors $\Omega$ 
%	such that any rational nef divisor $D$ has an effective rational representation 
%	$D=\sum\alpha_iD_i$ in terms of elements
%	of $\Omega$ such that the Okounkov body of $D$ is the Minkowski sum of
%	those of the $D_i$. Furthermore, we present an algorithm determining
%	the decomposition effectively for any given divisor. 
\end{abstract}

%*****************************************************************************

\section{Introduction}
The construction of Okounkov bodies associated to linear series 
on a projective variety, which was introduced by Okounkov 
and was given a theoretical framework in the seminal papers 
\cite{KK} %of Lazarsfeld and Musta\c{t}\v{a} 
and \cite{LM}, recently attracted 
attention as it encodes plenty of information on geometric 
properties of line bundles. For example, the volume of a big 
linear series essentially agrees with the euclidean volume of 
its associated Okounkov body.

Okounkov's idea is to assign to a big divisor $D$ on a smooth projective
$n$-dimensional variety $X$
a convex body $\Delta(D)$ in $n$-dimensional euclidean space $\R^n$. 
The construction, which we sketch in section \ref{surfaces}, depends on the 
choice of a flag of subvarieties  
$Y_\bullet: X=Y_0 \supset Y_1\supset\dots\supset Y_n$ of codimensions $i$ 
such that $Y_n$ is a non-singular point on each of the $Y_i$. 

In (\cite[Theorem B]{LM}), Lazarsfeld and Musta\c{t}\v{a}  prove the existence 
of a \emph{global Okounkov body}: for a smooth projective variety there is a 
closed convex cone $\Delta(X)\subset \R^n\times N^1(X)_\R$ such that the fiber 
over any big rational class $\xi\in N^1(X)_\R$ of the map $\phi$ induced by 
the second projection is equal to $\Delta(\xi)$. 
Additionally, in order to establish the log-concavity relation
$$
	\vol_X(D_1+D_2)^{1/n} \ge \vol_X(D_1)^{1/n}+\vol_X(D_2)^{1/n}
$$
for any two big $\R$-divisors, they deduce from the convexity of 
the global Okounkov body the inclusion
$$
	\Delta(D_1)+\Delta(D_2)\subset\Delta(D_1+D_2).
$$	
Here the left hand side denotes the Minkowski sum of $\Delta(D_1)$ and 
$\Delta(D_2)$, i.e., the set obtained by pointwise addition 
see \cite[Corollary 4.12]{LM}). 

In general the above inclusion turns out to be strict 
(see Example \ref{ex:not unique}). However, it would be 
desirable to know conditions for equality; in particular one would hope
to be able to decompose the Okounkov body of any big divisor 
as the Minkowski sum of  \engqq{simple} bodies. Specifically, 
the following questions arise: is there a set $\Omega$ 
of big divisors such that the Okounkov body of  any big divisor $D$ 
with respect to an admissible flag $Y_\bullet$
decomposes as Minkowski sum of the bodies associated to divisors 
in $\Omega$? If so, can $\Omega$ be chosen to be finite? 

An affirmative answer to these questions was given in \cite{L-S} in the case 
of the del Pezzo surface $X_3$, the blow-up of the projective plane 
in three non-collinear points, equipped with a certain natural flag. 
We prove in this paper that the answers to both questions are \engqq{yes} 
for a general admissible flag (see Proposition \ref{prop:general flag})
 on any smooth projective surface 
whose pseudo-effective cone is rational polyhedral. For example, 
this is the case for all del Pezzo surfaces and,
more generally, for surfaces with big anticanonical class (see 
\cite[Lemma 3.4]{BS}).
We will see in the following section that considering nef divisors is 
sufficient since the Okounkov body of any big divisor is a translate of the 
body associated to the positive part of its Zariski decomposition.  
\begin{introtheorem}
	Let $X$ be a smooth projective surface such that $\Eff(X)$ is 
	rational polyhedral, and let $X=Y_0\supset Y_1 \supset Y_2=\set{pt}$
	be a general flag. Then there exists a finite set $\Omega$ of nef $\Q$-
	divisors such that for any nef $\Q$-divisor $D$ there exist non-negative 
	rational numbers $\alpha_P(D)$ such that
	\begin{equation}\label{eq:theorem}
		D=\sum_{P\in\Omega}\alpha_P(D) P \qquad \text{and}\qquad 
		\Delta_{Y_\bullet}(D)=\sum_{P\in\Omega}\alpha_P(D)\Delta_{Y_\bullet}
			(P).
	\end{equation}
\end{introtheorem}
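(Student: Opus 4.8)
The plan is to reduce the assertion to a piecewise-linearity statement on the nef cone, by combining the explicit description of Okounkov bodies on surfaces (cf.\ \cite{LM}) with the finiteness of the Zariski chamber decomposition, and then to refine the finitely many resulting ``building block'' bodies into simplices and line segments in a way compatible with a decomposition of the underlying divisors. First I would set up the surface picture: writing $D-tC=P_t+N_t$ for the Zariski decomposition of $D-tC$, one has $\Delta_{Y_\bullet}(D)=\{(t,y):0\le t\le\mu(D,C),\ \alpha_D(t)\le y\le\beta_D(t)\}$, where $\mu(D,C)=\sup\{t:D-tC\in\Eff(X)\}$, $\alpha_D(t)=\ord_x(N_t|_C)$ and $\beta_D(t)=\ord_x(N_t|_C)+(P_t\cdot C)$. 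Since $\Eff(X)$ is rational polyhedral there are only finitely many negative curves; for a general flag (Proposition \ref{prop:general flag}) the point $x$ avoids all of them, so $\alpha_D\equiv0$, and $\nu_{Y_\bullet}(N)=0$ for every effective $N$, which is the reduction $\Delta_{Y_\bullet}(D)=\Delta_{Y_\bullet}(P)$ for big $D$ with positive part $P$ announced in the introduction. Hence for nef $D$ the body is $\Delta_{Y_\bullet}(D)=\{(t,y):0\le t\le\mu(D,C),\ 0\le y\le\beta_D(t)\}$ with $\beta_D(t)=P_{D-tC}\cdot C$ a concave piecewise-linear function (concave since the Okounkov body is convex), whose breakpoints are exactly the parameters $t$ at which the ray $D-tC$ crosses a wall of the Zariski chamber decomposition.

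Next I would construct a finite subdivision of $\Nef(X)$ into rational polyhedral subcones $\tau_1,\dots,\tau_N$ on each of which the combinatorial type of the ray $\{D-tC:t\ge0\}$ relative to the (finitely many, rational polyhedral) Zariski chambers is constant. On a fixed $\tau_i$ the threshold $\mu(D,C)$, the breakpoints $s_1(D)<\dots<s_m(D)=\mu(D,C)$ of $\beta_D$, and the values $\beta_D(s_k(D))$ are linear in $D$, because between consecutive breakpoints the positive part $P_{D-tC}$ is a linear function of $(D,t)$ — its ambient Zariski chamber being fixed there. Thus the vertices $(0,0)$, $(s_k(D),\beta_D(s_k(D)))$, $(\mu(D,C),0)$ of the polygon $\Delta_{Y_\bullet}(D)$ depend linearly on $D\in\tau_i$ and the polygon has a constant normal fan. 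Since polygons with a common normal fan add vertex-by-vertex under Minkowski summation, $\Delta_{Y_\bullet}$ restricted to $\tau_i$ is additive and positively homogeneous. Fixing rational generators $R_{i,1},\dots,R_{i,s_i}$ for the rays of $\tau_i$, every nef $\Q$-divisor $D\in\tau_i$ is a non-negative rational combination $D=\sum_j c_jR_{i,j}$ with $\Delta_{Y_\bullet}(D)=\sum_j c_j\Delta_{Y_\bullet}(R_{i,j})$, so $\Omega_0:=\bigcup_i\{R_{i,1},\dots,R_{i,s_i}\}$ is a finite set of nef $\Q$-divisors through which every nef class decomposes.

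Finally I would replace $\Omega_0$ by a set of nef divisors with simplex or segment Okounkov bodies. For each of the finitely many $R\in\Omega_0$, $\Delta_{Y_\bullet}(R)$ is the region below a concave piecewise-linear curve with a vertex at the origin, and such a polygon is a Minkowski sum of right triangles with legs on the coordinate axes together with segments parallel to the axes. I would realize this on the divisor level: when $\beta_R(\mu(R,C))>0$, split off a multiple of the nef square-zero class $P_{R-\mu(R,C)C}$, whose Okounkov body is the vertical segment in question; and split off each edge of the concave top boundary — whose slope is prescribed by the Zariski chamber giving rise to it — as the hypotenuse of a right triangle realized by a nef divisor lying on the face of $\Nef(X)$ orthogonal to the negative curves of that chamber. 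Collecting all these triangle and segment summands into a finite set $\Omega$ of nef $\Q$-divisors and composing with the previous step produces, for every nef $\Q$-divisor $D$, non-negative rationals $\alpha_P(D)$ with $D=\sum_{P\in\Omega}\alpha_P(D)P$ and $\Delta_{Y_\bullet}(D)=\sum_{P\in\Omega}\alpha_P(D)\Delta_{Y_\bullet}(P)$, each $\Delta_{Y_\bullet}(P)$ a simplex or a line segment, which is the claim.

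The main obstacle is the last step: the convex-geometric decomposition of a ``region below a concave curve'' into right triangles and axis-parallel segments is elementary, but making it cohere with a genuine decomposition of $R$ as a non-negative combination of \emph{nef} $\Q$-divisors whose Okounkov bodies are \emph{exactly} those triangles and segments is delicate — a right triangle with legs on the axes arises as an Okounkov body only when its hypotenuse has slope compatible with $C$ and with some Zariski chamber, so the summand divisors must be chosen carefully on the corresponding faces of $\Nef(X)$. A secondary technical point is the middle step, namely the bookkeeping needed to see that the subdivision of $\Nef(X)$ is finite and that $\mu$, the breakpoints, and the vertex data are genuinely linear on each piece.
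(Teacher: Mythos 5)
Your route is sound and genuinely different from the paper's. You prove the statement by subdividing $\Nef(X)$ into finitely many rational polyhedral subcones on which the ray $\{D-tC\}$ crosses a fixed sequence of Zariski chambers, observing that on each such subcone the data of the polygon $\Delta_{Y_\bullet}(D)$ (the threshold $\mu$, the breakpoints, the vertex heights, and the edge slopes $-C\cdot P_\Sigma(C)$) depend linearly on $D$ while the normal fan stays constant, so that $D\mapsto\Delta_{Y_\bullet}(D)$ is additive there and ray generators of the subcones give a finite $\Omega$; the linearity input is exactly Proposition \ref{prop:positive parts}, and the reduction $\alpha\equiv 0$ is Proposition \ref{prop:general flag}. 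This suffices for the theorem as stated, and what it buys is a short, conceptual finiteness argument. The paper instead builds an explicit basis: one nef divisor $M_\Sigma=dC+\sum\alpha_iN_i$ per Zariski chamber, obtained by solving the system (\ref{eq:system}) and using that the inverse of the negative definite intersection matrix has negative entries, plus one square-zero nef class per boundary ray of $\Nef(X)$; it then proves $\Delta(D)=\tau\Delta(M_\Sigma)+\Delta(D-\tau M_\Sigma)$ by comparing positive parts of $D-xC$ on the two ranges $x\le\mu_C(\tau M_\Sigma)$ and $x\ge\mu_C(\tau M_\Sigma)$ (the second via the observation that adding an effective divisor supported in $\Null(P_E)$ does not change the positive part), and inducts on the dimension of the face of $\Nef(X)$ containing $D$. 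What that buys is precisely the part you flag as your main obstacle and leave open: the basis elements have Okounkov bodies that are simplices and vertical segments (the claim of the abstract, stronger than the literal statement), and the peeling procedure is an effective algorithm needing only the intersection matrix of $C$ and the negative curves, whereas your $\Omega_0$ of ray generators is larger, not explicit, and its members' bodies are general polygons. Two small cautions if you write your version up: the breakpoints of $\beta_D$ are only a subset of the wall-crossing parameters (harmless, but say so), and the claim that $\nu_{Y_\bullet}(N)=0$ for \emph{every} effective $N$ is false as stated (take $N$ containing $C$); it holds for negative parts because $C$, having $C^2>0$, is never a component and $p$ avoids the negative curves, which is all you need.
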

\begin{definition}
	A presentation $D=\sum \alpha_i D_i$ as in (\ref{eq:theorem}) is called a 
	\emph{Minkowski decomposition} of $D$ with respect to 
	the \emph{Minkowski basis} $\Omega$.
\end{definition}
The proof, which we present in section \ref{proof}, includes the construction
of the Minkowski basis $\Omega$ as well as an effective method to 
determine a Minkowski decomposition of any given nef $\Q$-divisor. 
It depends on two features distinctive for surfaces, firstly a 
characterization of Okounkov bodies in terms of intersections with the 
positive and negative part in the Zariski decomposition due to Lazarsfeld and 
Musta\c{t}\v{a}, and secondly on the Zariski chamber decomposition of the big 
cone introduced in \cite{BKS}. We sketch these results in section 
\ref{surfaces}.

Throughout this paper we work over the complex numbers. 

\begin{acknowledgements}
We are grateful to Thomas Bauer and Tomasz Szemberg for helpful comments and
valuable discussions.
\end{acknowledgements}

%*****************************************************************************
%***** Okounkov bodies on surfaces *******************************************
%*****************************************************************************

\section{Okounkov bodies on surfaces}\label{surfaces}

In this section we first give a quick review of Okounkov's construction 
in arbitrary dimension (we refer to \cite{LM} for details), and then turn to 
additional features known in the case of surfaces.

As mentioned in the introduction, one assigns to a big divisor $D$ on a smooth 
projective $n$-dimensional variety $X$
a convex body $\Delta(D)$ in $\R^n$.
 The construction depends on the choice of
a flag on $X$, i.e., a sequence  $Y_\bullet: X=Y_0 \supset Y_1\supset\dots
\supset Y_n$ of subvarieties $Y_i$ of codimension $i$. A flag is 
\emph{admissible} if $Y_n$ is a non-singular point on each of the $Y_i$. To 
an admissible flag, one assigns a function
$$
	\valnu: H^0(X,\mathcal O_X(D))\to \Z^n,
$$
by mapping a section 
$s\in H^0(X,\mathcal O_X(D))$ to the tuple $(\nu_1(s),\dots,\nu_n(s))$ where
$\nu_1(s):=\ord_{Y_1}(s)$, $\nu_2(s)$ is given by the order of vanishing along
$Y_2$ of the section $s_1\in H^0(Y_1, \mathcal O_{Y_1}(D-\nu_1(s)Y_1))$ 
determined by $s$, and so forth up to $\nu_n(s)$. Repeating this construction 
for integral multiples of $D$, we define the Okounkov body 
$\Delta(D)=\Delta_{Y_\bullet}(D)$ to be the closed convex hull of the set
$$
	S(D) := \bigcup_{k\ge0} \set{\tfrac 1 k\valnu(s)\with s\in H^0(X, 
		\mathcal O_X(kD))}.
$$ 
Note that although the number of image vectors $(\nu_1,\dots,\nu_n)$
is equal to the dimension of $H^0(X, \mathcal O_X(kD))$ for each $k$, 
the convex body $\Delta(D)$ need not be polyhedral 
(see \cite[Section 6.3]{LM}). By \cite[Proposition 4.1]{LM}, 
numerically equivalent divisors have identical 
Okounkov bodies and for any positive integer $p$ we have the
scaling $\Delta(pD)=\tfrac 1 p \Delta(D)$, so we can assign an Okounkov 
body to big rational classes in the N\'eron-Severi vector space $N^1(X)_\R$. 
For non-rational classes this is not so straightforward. Instead, it follows 
from the existence of global Okounkov bodies (\cite[Theorem B]{LM}): There is 
a closed convex cone $\Delta(X)\subset \R^n\times N^1(X)_\R$ such that the 
fiber over any big rational class $\xi\in N^1(X)_\R$ of the map $\phi$ induced
by the second projection is equal to $\Delta(\xi)$. Consequently, the Okounkov
body of a big real class is defined as its fiber under $\phi$. Additionally, 
since the image of $\Delta(X)$ under $\phi$ is the pseudo-effective cone 
$\Eff(X)$, the construction can be extended to pseudo-effective real classes. 

From the existence of the global Okounkov body 
on $X$ many interesting properties of the volume function $\vol_X:\BigCone(X)
\to \R$ can quite easily be proved. For example, the log-concavity relation
$$
	\vol_X(D_1+D_2)^{1/n} \ge \vol_X(D_1)^{1/n}+\vol_X(D_2)^{1/n}
$$
for any two big $\R$-divisors is a consequence of the 
Brunn-Minkowski theorem: from the convexity of the global Okounkov 
body we obtain the inclusion
$$
	\Delta(D_1)+\Delta(D_2)\subset\Delta(D_1+D_2)
$$	
with the Minkowski sum on the left hand side (see \cite[Corollary 4.12]{LM}). 
\medskip

For the remainder of this section, let $X$ be a smooth projective surface with
an admissible flag
$$
	X\supset C\supset \set{p}
$$
on it.
Any pseudo-effective (rational) divisor $D$ on $X$ has a Zariski decomposition
$$
	D = P_D + N_D,
$$
where $P_D$ is nef, and $N_D$ is effective, orthogonal to $P_D$, and if it is 
not the zero-divisor, it has negative definite intersection matrix.
Define 
$$
	\mu_{C}(D):=\sup\set{t\with D-tC \mbox{ effective}}
$$
and consider the functions
$$
	\alpha,\beta:[0,\mu_{C}(D)]\to \R,
$$
with
\be
	\alpha(x) 	    & =& \ord_p(N_{D-xC}) \mbox{, and } \\
	\beta(x)          & =& \ord_p(N_{D-xC} ) + (C\cdot(P_{D-xC}) ).
\ee
Then by (\cite[Theorem 6.4]{LM}), $\alpha$ and $\beta$ are the upper 
and lower boundary functions for $\Delta(D)$, respectively. Concretely, 
%we have the identity
$$
	\Delta(D)=\set{(x,y)\in \R^2 \with 0\le x\le \mu_{C}(D), \ \alpha(x)\le y
		\le\beta(x)}.
$$
The following proposition shows that in the situation of the theorem,
in order to determine the Okounkov body 
of a big divisor $D$ it is sufficient to know the positive part of the 
divisors
$D-tC$ for $0\le t\le \mu_C(D)$. 
\begin{proposition}\label{prop:general flag}
	If the pseudo-effective cone $\Eff(X)$ is rational polyhedral 
	and $X\supset C\supset p$ is a general admissible flag, then $C$ is 
	big and nef as a divisor, and
	$$
		\alpha(x)=0, \quad \beta(x)=C\cdot P_{D-xC}
	$$
	for all $0\le x \le\mu_C(D)$.
\end{proposition}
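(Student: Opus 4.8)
The plan is to argue in two stages: first that a general admissible flag consists of a big and nef curve $C$ together with a general point $p \in C$, and second that this positional genericity forces $\ord_p(N_{D-xC}) = 0$ throughout the relevant range, from which the formula for $\beta$ follows immediately by the Lazarsfeld--Musta\c{t}\v{a} description recalled above.

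For the first stage I would invoke that $\Eff(X)$ is rational polyhedral, hence so is its dual $\Nef(X)$, and that the big and nef cone has nonempty interior; a very general (in particular, a general, i.e.\ belonging to a suitable dense open subset of a linear system) curve representing an ample class is irreducible and smooth, so it is an admissible choice for $Y_1 = C$, and any point on it is admissible for $Y_2$. Thus ``general admissible flag'' means $C$ ample (hence big and nef) and $p$ a general point of $C$.

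The heart of the matter is the vanishing $\alpha(x) = \ord_p(N_{D-xC}) = 0$. The key observation is that as $x$ ranges over $[0,\mu_C(D)]$, the divisor $D - xC$ traverses finitely many Zariski chambers (by the Zariski chamber decomposition of \cite{BKS}, which is locally finite and, since $\Eff(X)$ is rational polyhedral, globally finite), so the negative parts $N_{D-xC}$ are supported on a \emph{finite} set of irreducible negative curves $E_1,\dots,E_r$ on $X$ — the set of curves that appear in the support of some Zariski-negative part is finite because each Zariski chamber has a fixed ``negative locus'' of such curves and there are only finitely many chambers. A general point $p \in C$ then avoids all of $E_1 \cup \dots \cup E_r$: indeed $C$ is not contained in any $E_i$ (it is ample, they are negative curves of negative self-intersection), so $C \cap E_i$ is a finite set of points, and a general $p \in C$ lies outside the finite set $\bigcup_i (C \cap E_i)$. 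Since $p \notin \supp N_{D-xC}$ for every $x$, we get $\ord_p(N_{D-xC}) = 0$, i.e.\ $\alpha \equiv 0$; plugging this into $\beta(x) = \ord_p(N_{D-xC}) + C \cdot P_{D-xC}$ gives $\beta(x) = C \cdot P_{D-xC}$.

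The main obstacle I anticipate is making the finiteness of the relevant negative curves fully rigorous and uniform in $D$: one must be careful that the set of curves potentially appearing in $\supp N_{D-xC}$, while finite for each fixed $D$, is what controls the choice of the general flag — but the proposition only claims the formula for a fixed flag and all $D$, so in fact one needs the \emph{stronger} statement that there is a single finite set of curves (namely all irreducible curves of negative self-intersection on $X$, which is finite precisely because $\Eff(X)$ is rational polyhedral — each spans an extremal ray or appears in a bounded list) whose union $p$ must avoid; granting that, the genericity of $p$ is a single open condition independent of $D$. A secondary point to check is the edge behavior at $x = \mu_C(D)$, where $D - xC$ sits on the boundary of $\Eff(X)$ and the Zariski decomposition may degenerate, but continuity of $P_{D-xC}$ and $N_{D-xC}$ in $x$ (which follows from the local finiteness of the chamber structure) handles this.
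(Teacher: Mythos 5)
Your proposal is correct and follows essentially the same route as the paper: the support of any negative part $N_{D-xC}$ consists of curves of negative self-intersection, of which there are only finitely many since $\Eff(X)$ is rational polyhedral, so for a general flag the curve $C$ has positive self-intersection (hence is big and nef) and the general point $p\in C$ lies on no negative curve, giving $\alpha\equiv 0$ and the stated formula for $\beta$. The uniform finiteness you flag as the ``main obstacle'' is precisely the paper's direct argument, which makes your initial Zariski-chamber detour for a fixed $D$ unnecessary.
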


\begin{proof}
	If $\Eff(X)$ on $X$ is rational polyhedral,
	then in particular there are only finitely many irreducible curves $E$
	on $X$ with self-intersection $E^2\le0$. Therefore, in a general 
	flag $X\supset C\supset p$ the irreducible 
	curve $C$ has positive self-intersection, so it is 
	big and nef as a divisor. Furthermore, $p$ is a non-singular point on $C$,
	which does not lie on any curve with negative self-intersection.
	Now by definition, the negative part $N_{D-xC}$ in the Zariski 
	decomposition of $D-xC$ either is the zero-divisor, or has negative 
	definite intersection matrix. In the latter case, its support consists of 
	curves with negative self-intersection, so in either case we have 
	$\ord_p(N_{D-xC})=0$ for all $x$.
\end{proof}
\begin{example}
\emph{For any $0\le t\le1$ the class $C-tC$ is nef and effective, 
hence $P_{C-tC}=C-tC$. So by the proposition, $\Delta(C)$ is the simplex of 
height $C^2$ and length $1$. }
\end{example}

\begin{remark}\label{rmk:nefness}
	By \cite[Corollary 2.2]{L-S} the Okounkov body of a big divisor $D$
	with respect to a flag $X\supset C\supset p$ such that $C$ is not a
	component of $N_D$ is a translate by the vector $(0,\ord_p(N_D))$.
	In particular by the above proof, for a general flag on a surface with 
	rational polyhedral	pseudo-effective cone, the Okounkov bodies of any big 
	divisor and of its positive part coincide.
\end{remark}
Recall that by the main result of \cite{BKS} on a smooth
projective surface there exists a locally finite decomposition of 
$\BigCone(X)$ into locally polyhedral subcones, the so called \emph{Zariski 
chambers}, such that
\begin{itemize}
	\item the support of negative parts of divisors is constant on each 
		chamber,
	\item the volume function $\vol_X(\cdot)$ varies polynomially on the 
		chambers, and
	\item on the interior of each chamber the augmented base loci 
	$\mathbb B_+$ are constant.
\end{itemize}
The basic idea of \cite{BKS} is to consider for a big and nef divisor $P$ 
the set 
$$
	\Sigma_P:=\set{D\in \BigCone(X)\with \Neg(D)=\Null(P)},
$$
where $\Neg(D)$ denotes the support of $N_D$ and $\Null(P)$ is the
set of irreducible curves orthogonal to $P$ with respect to the intersection 
product. These sets give a decomposition of $\BigCone(X)$ 
obviously satisfying the first property in the above list, while proving 
the remaining properties as well as local finiteness still requires 
quite an effort. For an explicit description of chambers,  passing 
to closures in \cite[Proposition 1.10]{BKS} we obtain the identity
\begin{equation}\label{closure}
	\overline\Sigma_P=\mbox{convex hull}\Big(\Nef(X)\cap\Null(P)^\bot,\ 
	\Null(P)\Big),
\end{equation}
from which we deduce the following useful statement about positive parts.

\begin{proposition}\label{prop:positive parts}
	Let $P$ be a big and nef divisor on $X$ with corresponding Zariski chamber
	$\Sigma_p$. Then for all $D_1,D_2\in \overline\Sigma_P$
	we have 
	$$
    P_{D_1+D_2}=P_{D_1}+P_{D_2},
  $$
  i.e., the positive parts of the Zariski decompositions vary 
  linearly on the 
	closure of each Zariski chamber.
\end{proposition}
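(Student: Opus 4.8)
The plan is to exploit the explicit description \eqref{closure} of $\overline\Sigma_P$ together with the uniqueness and characterizing properties of the Zariski decomposition. First I would reduce to understanding the positive part of a single divisor $D\in\overline\Sigma_P$. By \eqref{closure}, every such $D$ can be written as $D = A + N$ where $A\in\Nef(X)\cap\Null(P)^\bot$ and $N$ is an effective $\R$-divisor supported on $\Null(P)$. The claim I would establish is that this is \emph{exactly} the Zariski decomposition of $D$, i.e.\ $P_D = A$ and $N_D = N$ whenever such a representation is chosen with $N$ as large as possible — or, more carefully, that $P_D$ depends linearly on $D$ across $\overline\Sigma_P$. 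To see that a decomposition $D = A + N$ of the above form is Zariskian, I would check the three defining conditions: $A$ is nef by hypothesis; $N$ is effective and supported on the finitely many curves in $\Null(P)$, which (being a subset of a set of mutually orthogonal negative curves, by the negative-definiteness packaged in the BKS setup) has negative-definite intersection matrix once we restrict to the curves actually appearing in $N$; and $A\cdot E = 0$ for every component $E$ of $N$ since $E\in\Null(P)$ and $A\in\Null(P)^\bot$. By uniqueness of the Zariski decomposition, it follows that $P_D = A$ and $N_D = N$.

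The second step is to see that the assignment $D\mapsto A$ in the above representation is well defined and additive. Here I would argue as follows: given $D_1, D_2\in\overline\Sigma_P$ with Zariski decompositions $D_i = P_{D_i} + N_{D_i}$, Proposition~1.6's proof shows $P_{D_i}\in\Nef(X)\cap\Null(P)^\bot$ and $N_{D_i}$ supported on $\Null(P)$. Then $D_1 + D_2 = (P_{D_1}+P_{D_2}) + (N_{D_1}+N_{D_2})$ is a decomposition of the form guaranteed by \eqref{closure}, with $P_{D_1}+P_{D_2}$ nef, orthogonal to $\Null(P)$, and $N_{D_1}+N_{D_2}$ effective and supported on $\Null(P)$ (hence with negative-definite intersection matrix on its actual support). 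By the first step applied to $D_1+D_2$, this decomposition \emph{is} the Zariski decomposition, so $P_{D_1+D_2} = P_{D_1}+P_{D_2}$, which is the assertion.

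I expect the main obstacle to be the negative-definiteness condition: one must make sure that the curves appearing in $N_{D_1}+N_{D_2}$ (equivalently, the union of the supports of $N_{D_1}$ and $N_{D_2}$) form a negative-definite configuration. This is not automatic for an arbitrary effective divisor supported on $\Null(P)$, but it holds here because all these curves lie in $\Null(P)$ for a single big and nef $P$: the Hodge index theorem forces the intersection form to be negative-definite on the span of any set of curves orthogonal to a big and nef class (none of them can be numerically a multiple of $P$, and their span lies in $P^\bot$, on which the form is negative-definite). I would invoke this explicitly — it is essentially the content that makes $\overline\Sigma_P$ a Zariski chamber in the first place, and it is already implicit in \eqref{closure}. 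A minor secondary point is to confirm that extracting $N_{D_1}+N_{D_2}$ entirely leaves a nef remainder, but this is immediate since the remainder is precisely $P_{D_1}+P_{D_2}$, a sum of nef classes. Once these points are in place the proposition follows from uniqueness of Zariski decomposition with no further computation.
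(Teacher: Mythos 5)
Your proposal is correct and takes essentially the same route as the paper: use the description (\ref{closure}) of $\overline\Sigma_P$, check that such a representation satisfies the three characterizing conditions of the Zariski decomposition (with negative definiteness of $\Null(P)$ supplied by the Hodge index theorem, as packaged in \cite{BKS}), and conclude by uniqueness after adding; you merely make explicit the step $P_{D_i}=A_i$ that the paper leaves implicit. The only blemish is the parenthetical claim that the curves in $\Null(P)$ are mutually orthogonal, which is neither true in general nor needed --- your later Hodge-index justification is the correct one.
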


\begin{proof}
	Let $D_1=P_1+\sum_{i=1}^s \alpha_iN_i$ and $D_2=P_2+\sum_{i=1}^s 
	\beta_iN_i$ be representations corresponding to (\ref{closure}) with 
	$\alpha_i,\beta_i\ge0$,	$N_i\in\Null(P)$, and $P_1,P_2$ nef. Clearly, 
	$P_1+P_2$ is nef and has intersection product zero with the $N_i$. 
	Furthermore, the divisor $\sum_{i=1}^s (\alpha_i+\beta_i)N_i$ is effective
	and has negative definite intersection matrix. Thus
	$$
		D_1+D_2=(P_1+P_2)+\sum_{i=1}^s(\alpha_i+\beta_i)N_i
	$$
	is the Zariski decomposition.
\end{proof}

%*****************************************************************************
%***** Minkowski decomposition ***********************************************
%*****************************************************************************

\section{Minkowski decomposition}\label{proof}

In this section we prove the main theorem. Fix throughout 
a general admissible flag $Y_\bullet: X\supset C\supset p$ on a smooth 
projective surface $X$. 

As stated in the introduction, 
the starting point for this investigation was the observation from 
\cite{LM} that for any two pseudo-effective divisors $D_1,D_2$ we 
have the inclusion
$$
	\Delta(D_1)+\Delta(D_2) \subset \Delta(D_1+D_2).
$$
This inclusion turns out to be strict in general. 
We refer to \cite{L-S} for examples. 
%With the results from the previous section, however, 
%we find a sufficient condition for equality. 
%Putting Propositions \ref{prop:general flag} and \ref{prop:positive parts} 
%together, we obtain
%\begin{lemma}
%	Let $X$ be a smooth projective surface, $P$ a big and nef divisor on $X$,
%	and let $D_1,D_2$ be big divisor classes in $\overline\Sigma_P$.
%	If $X\supset C\supset p$ is a general admissible flag , then
%	$$
%		\Delta(D_1)+\Delta(D_2)=\Delta(D_1+D_2).
%	$$
%\end{lemma}
%\begin{proof}
%	By generality of the flag we have $C\notin\Null(P)$, and $p$ 
%	does not lie on any curve in $\Null(P)$. 
%\end{proof}
%such that 

On the other hand, one observes that the Okounkov body of a pseudo-effective 
divisor $D$ with respect to $Y_\bullet$ can always be 
decomposed as the Minkowski sum of finitely many simplices 
and line segments. ($\Delta(D)$ is the area of the upper 
right quadrant bounded by the piecewise linear, concave function $\beta$.) 
The question then is: do these 
elementary \engqq{building blocks} come up as Okounkov
bodies themselves? As the theorem shows, the answer is \engqq{yes}.

Before we prove the theorem, let us consider candidates for a Minkowski basis, 
i.e., nef divisors whose Okounkov bodies are of one of the elementary types
mentioned above. 
\begin{itemize}
	\item
	For a nef divisor $D$ with $D^2=0$, for positive $t$ none of the 
	divisors $D-tC$ is effective since $C$ by Proposition 
	\ref{prop:general flag} is big and nef being the curve in a 
	general admissible flag.
	Therefore, $\mu_C(D)=0$, and $\Delta(D)$ is the vertical line segment
	of length $C\cdot D$ (see Figure \ref{fig:1}).
	
\begin{figure}[ht]
\centering
\unitlength 1mm % = 2.845pt
\linethickness{0.4pt}
\ifx\plotpoint\undefined\newsavebox{\plotpoint}\fi % GNUPLOT compatibility
\begin{picture}(50,39)(2,0)
\includegraphics[scale=0.8, clip, trim=5 23 0 10]{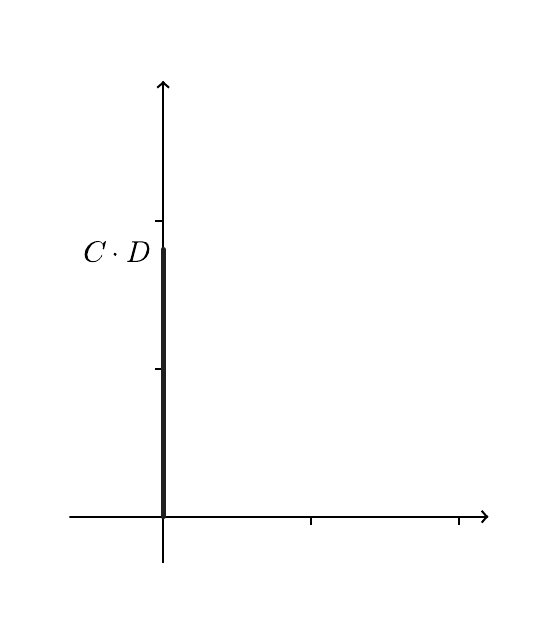}
\end{picture}
\caption{The Okounkov body $\Delta(D)$\label{fig:1}}
\end{figure}

	\item
	If for a big and nef divisor $D'$ all the classes $D'-tC$ for 
	$0<t<\mu_C(D')$ lie in the same Zariski chamber then by Proposition 
	\ref{prop:positive parts} the positive parts $P_{D'-tC}$ vary linearly 
	with $t$. Consequently,	$\Delta(D')$ is the simplex of height $C\cdot D'$ 
	and length $\mu_C(D')$ (see Figure \ref{fig:2}). 
	
	\begin{figure}[ht]
\centering
\unitlength 1mm % = 2.845pt
\linethickness{0.4pt}
\ifx\plotpoint\undefined\newsavebox{\plotpoint}\fi 
\begin{picture}(50,45)(0,0)
\includegraphics[scale=0.7, clip, trim=5 27 0 30]{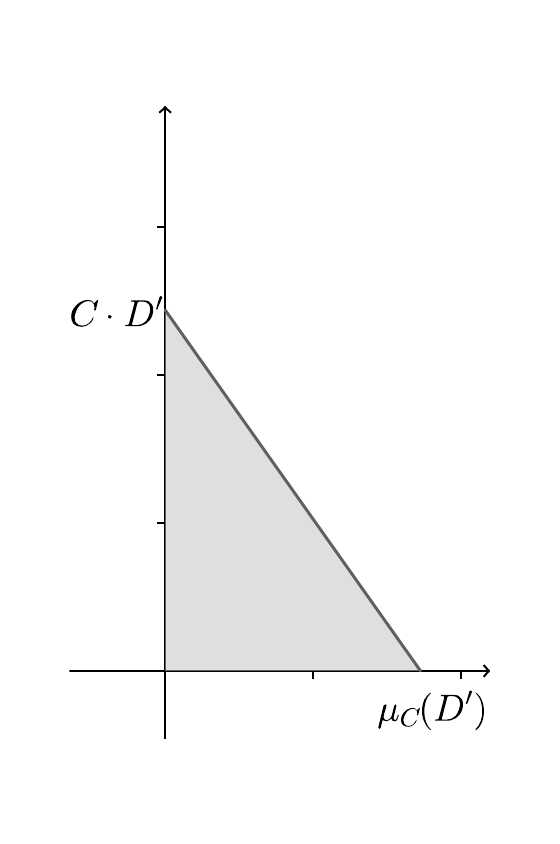}
\end{picture}
\caption{The Okounkov body $\Delta(D')$\label{fig:2}}
\end{figure}

\end{itemize}

We now turn to the proof of the theorem. It consists of two parts:
we first construct the set $\Omega$ and
then show how to find the presentation of any big and nef divisor $D$
in terms of elements of $\Omega$ which yields the Minkowski
decomposition of $D$.
\begin{remark}
	Effective representations of a nef divisor in terms of the Minkowski basis
	are not unique. It is possible that such a representation is \emph{not} a 
	Minkowski decomposition (see Example \ref{ex:not unique}). 
	This is why the second part of the proof is important as it shows how to 
	pick the right decomposition.
\end{remark}

\subsection*{Construction of a Minkowski basis}
In the Zariski chamber decomposition of the big cone $\BigCone(X)$ we
assign to each chamber an element of
$\Omega$ as follows.
Writing $\set{N_1,\dots,N_s}$ for the set of curves in the support of negative
parts of divisors in a chamber $\Sigma$, we define the \engqq{corresponding 
Minkowski basis element} $M$ as follows:  
Consider the linear subspace of $N^1(X)_\R$ spanned by $C$ together with the 
classes of the curves $N_i$. Its intersection with the subspace 
$N_1^\bot\cap\dots\cap N_s^\bot$ is a rational line, spanned by 
some integral divisor $M=dC+\sum \alpha_i N_i$. 
We will argue that $d$ and the $\alpha_i$ all have the same signs, and we
conclude that either $M$ or $-M$ is nef. 

The intersection matrix $S$ of the divisor
$\sum N_i$ is negative definite with non-negative entries outside the diagonal.
By the auxiliary result \cite[Lemma 4.1]{BKS} (see also \cite[Lemma A.1]{BF}),
the inverse matrix $S^{-1}$ has only negative entries. Therefore, and since 
$CN_i\ge 0$, the solution to the system of the equations
\begin{equation}\label{eq:system}
	S\cdot (\alpha_1,\dots,\alpha_s)^t = -d(CN_1,\dots,CN_s)^t
\end{equation}
for fixed $d$ is a vector $(\alpha_1,\dots,\alpha_s)$ whose entries have the 
same sign as $d$. Fix a positive integral solution and set 
$M=dC+\sum\alpha_iN_i$. Note that since $M$ lies in 
$N_1^\bot\cap\dots\cap N_s^\bot$ it is nef by the positivity of its 
coefficients and the nefness of $C$. Furthermore, it lies in the closure of 
$\Sigma$, or more concretely in the closure of the face 
$\overline\Sigma\cap\Nef(X)$. 
\begin{remark}
Note that in the above construction different chambers can have the same 
corresponding Minkowski basis element. For example, on the del Pezzo surface 
$X_2$ with standard basis $H,E_1,E_2$ and with a flag such that $C$ has class 
$H=\pi^\ast(\mathcal O_{\P^2}(1))$ the chambers 
$\Sigma_{H}$, $\Sigma_{2H-E_1}$, and $\Sigma_{2H-E_2}$ have $M=H$. 

Note also that the corresponding basis element to the nef cone is always $C$.
\end{remark}
We can now describe the Minkowski basis $\Omega$: it consists of the divisors 
$M_\Sigma$ constructed above together with one integral representative for 
each ray of the nef cone not contained in $\BigCone(X)$. 

Note that since $\Eff(X)$ is rational polyhedral the set $\Omega$ is finite. 
Note furthermore that the divisors in $\Omega$ have Okounkov bodies which 
cannot be decomposed as Minkowski sums, i.e., in a sense the set $\Omega$ is 
minimal:  
By construction, for all $0<t<\mu_{C}(M)=d$ the class $M_\Sigma-tC$ lies in 
the cone spanned by $\Nef(X)\cap N_1^\bot\cap\dots\cap N_s^\bot$ and the 
$N_1,\dots,N_s$, i.e., in the closure of the Zariski chamber $\Sigma$. 
Therefore, the positive part of $M_\Sigma-tC$ varies linearly, so 
$\Delta(M_\Sigma)$ is the simplex of height $C\cdot M$ and length $d$, whereas
the other basis elements $D_i$ lie in the boundary of $\Eff(X)$, so 
$\mu_{C}(D_i)=0$ which means that the corresponding Okounkov body is the 
vertical line segment of length $C\cdot D_i$.

\subsection*{Algorithmic construction of Minkowski decompositions}
To complete the proof we now describe how to find the Minkowski decomposition 
of a given nef divisor $D$.

If $D$ is not big, then $D^2=0$ and $\Omega$ contains some positive multiple 
$D'=\beta D$. Thus
$$
	\Delta(D)=\frac1\beta\Delta(D'),
$$
and we are done.

Otherwise, consider the Zariski chamber $\Sigma$ corresponding to the big and 
nef divisor $D$. Let $M$ be the corresponding Minkowski basis element and set 
$$
	\tau:=\sup\set{t\with D-tM \mbox{ nef}}.
$$ 
Since nefness is defined by finitely many linear conditions, $\tau$ is 
rational. The nef $\Q$-divisor $D' := D-\tau M$ lies on the boundary of the 
face $\Nef(X)\cap\Null(D)$.
If $D'=0$, we are done. 

Otherwise, we claim that
\begin{equation}\label{eq:separation}
	\Delta(D)= \tau\Delta(M) + \Delta(D'),
\end{equation}
so $\Delta(D)$ decomposes into the elementary part $\tau\Delta(M)$ and
the Okounkov body of the divisor $D'$.

For the proof we first note that by construction of the Minkowski basis, $M$ 
lies, like $D'$, on the boundary of the Zariski chamber $\Sigma$. Furthermore,
as we have seen above, the divisors $M-tC$ lie in the closure of the chamber 
$\Sigma$ for $0<t<\mu_C(M)$. 
Thus by Proposition \ref{prop:positive parts} we have 
$$
	P_{D-xC}=P_{D'}+P_{\tau M-xC} = D' + P_{\tau M-xC}
$$
for $0\le x \le \mu_{C}(\tau M)$. 

For the remaining 
$\mu_{C}(\tau M)\le x \le \mu_{C}(D)$ let $\tilde M$ denote the divisor 
$\tau M-\mu_{C}(\tau M)C$ (which 
is just $\tau(\sum \alpha_i N_i)$ in the above notation). We claim that for 
any $t>0$ we have the inclusions
$$
	\supp(\tilde M)\subset\Null(D)\subset\Null(D')=\mathbb B_+(D')\subset 
	\mathbb B_+(D'-tC)=\Null(P_{D'-tC}).
$$
The two equalities are given by \cite[Example 1.10]{ELMNP} and 
\cite[Example 1.11]{ELMNP} respectively. The first inclusion is clear since 
the $N_i$ are contained in $\Null(D)$. The second one follows from the fact 
that $D'$ is contained in the boundary of the face of the nef cone containing 
$D$, while the last inclusion is a direct consequence of the fact that 
subtracting a nef divisor can only augment the base locus. 

Note that in general for a big divisor $E$ with Zariski decomposition 
$E=P_E+N_E$ and an effective divisor $F$ with support contained in 
$\Null(P_E)$ the decomposition
$$
	E+F= P_E + (N_E + F)	
$$
is the Zariski decomposition: $P_E$ is nef, has trivial intersection with all 
components of $(N_E +F)$, and the latter divisor has negative definite 
intersection matrix. In other words, adding an effective divisor $F$ with 
support contained in $\Null(P_D)$ does not alter the positive part.

Taking in the above consideration $E$ and $F$ to be $D-xC$ and $\tilde M$ 
respectively, we obtain the identity
$$
	P_{D-xC}=P_{D'-(x-\mu_{C}(\tau M))C} 
$$	
for $\mu_{C}(\tau M)\le x \le \mu_{C}(D)$.	
Putting the two decompositions of positive parts together, we get
$$
	\beta_D(x)=\begin{cases}
				\beta_{\tau M}(x) + C\cdot D'	& 0\le x \le \mu_{C}(\tau M)\\
				\beta_{D'}(x-\mu_{C}(\tau M)) & \mu_{C}(\tau M)\le x \le 
					\mu_{C}(D),
									\end{cases}
$$
which amounts to the claimed identity (\ref{eq:separation}).

Repeat the above procedure with the divisor $D'$. This is possible because 
if $D'$ is big and nef, it defines a Zariski chamber $\Sigma$ with $M_{\Sigma}
\neq M$, which can be seen as follows: if it were not the case, we would have 
$\Null(D')\subset\Null(M)$, but then it follows from $D=M+D'$ that 
$\Null(D')\subset\Null(D)$, which is impossible. The algorithm terminates 
after at most $\rho$ steps, since in every step the dimension of the face of 
the nef cone in which $D$ lies decreases. Eventually, we end up with either 
$0$ or a divisor spanning an extremal ray of the nef cone. Such a divisor has 
a multiple in $\Omega$, and we are done. \hfill$\Box$\medskip

Note that in order to determine the Minkowski decomposition of a given 
divisor $D$ it is not necessary to know the whole Minkowski basis of $X$.
Instead in every step the necessary basis element can be found based on 
knowledge of the intersection matrix of $\Null(D)$ alone. In fact, the
algorithm can be implemented for automated computation, provided the 
intersection matrix of $C$ together with the negative curves on $X$ is known.

%*****************************************************************************
%***** Del Pezzo surfaces ****************************************************
%*****************************************************************************

\section{Del Pezzo surfaces}
On a del Pezzo surface $X$ the pseudo-effective cone is rational polyhedral by
the cone theorem. Concretely, it is spanned by rational curves of 
self-intersection $-1$. The surface $X$ is either $\P^2$, its blow-up $X_r$ in
up to 8 general points, or $\PtP$. A complete list of the $(-1)$-curves on the
$X_r$ is well known \cite[8 Chapt IV]{M} (cf. \cite[Theorem 3.1]{BFN} for an 
elementary proof): they are the exceptional curves $E_1,\dots,E_r$ together 
with the strict transforms of
\begin{itemize}
	\item
		lines through two of the $p_i$,
	\item
		irreducible conics through five of the $p_i$, if $r\ge5$,
	\item
		irreducible cubics through six of the $p_i$ with a double point in 
		one of them, if $r\ge7$,
	\item
		irreducible quartics through the eight points $p_i$ with 
		a double point in three of them, if $r\ge8$,
	\item
		irreducible quintics through the eight points $p_i$ with
		a double point in six of them, if $r\ge8$,
	\item
		irreducible sextics through the eight points $p_i$ with
		a double point in seven of them, and a triple point in one of them, 
		if $r\ge8$.
\end{itemize}
A general flag on $X_r$ consists of an irreducible curve $C$ with a general 
point $p$ on it where $C$ is the strict transform of an irreducible member of 
the class $\mathcal O_{\P^2}(k)$ for some $k>0$. We consider the case $k=1$ 
(the others work analogously) and write as usual $H$ for the class of $C$.
Let us construct a Minkowski basis for $X_r$. Starting with any chamber 
$\Sigma$, we consider $\Neg(\Sigma)=\set{N_1,\dots,N_s}$, the support of 
negative parts of the divisors in $\Sigma$. Its intersection matrix, being 
negative definite with diagonal entries $-1$, can have only zero entries 
outside the diagonal. In particular, we can immediately read off the basis 
element $M(\Sigma)$ from the system of equations (\ref{eq:system}):
Setting $d=1$, we obtain $\alpha_i={N_i\cdot H}$ for all $i$, hence we have
$$
	M(\Sigma)=H+\sum_{i=1}^s ({N_i\cdot H}) N_i.
$$
 Let us determine the Okounkov bodies of this Minkowski basis element.
 It is clear that $\mu_H(M(\Sigma))=1$, since $\sum_{i=1}^s ({N_i\cdot H}) N_i$
 lies on the boundary of $\Eff(X_r)$. On the other hand, setting
 $$
         \lambda:=H\cdot(H+\sum_{i=1}^s ({N_i\cdot H}) N_i),
$$ 
by the argumentation in the proof of the theorem, $\Delta(M(\Sigma))$ is the 
simplex of height $\lambda$ and length $1$, which we denote by 
$\Delta(\lambda,1)$. The remaining elements of $\Omega$ are curves $E$ with 
self-intersection $E^2=0$. As we have seen above, their Okounkov body is the 
vertical line segment of length $H\cdot E$. The following statement thus is a 
direct consequence of the theorem.
\begin{proposition}
	On a del Pezzo surface $X_r$, for any big divisor $D\subset X_r$ the 
	function $\beta(x)$ bounding the Okounkov body is piecewise linear with 
	integer slope on each linear piece.
\end{proposition}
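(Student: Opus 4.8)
The plan is to derive this proposition directly from the main theorem together with the explicit description of the Minkowski basis for $X_r$ given just above the statement. First I would invoke the main theorem: for the general flag $X_r \supset C \supset p$ with $C$ of class $H$, any big (equivalently, by Remark \ref{rmk:nefness}, we may pass to the positive part and assume nef) divisor $D$ admits a Minkowski decomposition $D = \sum_{P\in\Omega}\alpha_P(D)\,P$ with $\alpha_P(D)\in\Q_{\ge 0}$, and correspondingly $\Delta(D) = \sum_{P\in\Omega}\alpha_P(D)\,\Delta(P)$. Since the boundary function $\beta$ is additive under Minkowski sums of the relevant planar bodies (the upper boundary of a Minkowski sum of regions of the form $\{0\le x\le\mu,\ 0\le y\le\beta_i(x)\}$ is obtained by the appropriate concatenation/addition of the $\beta_i$, as used in the proof of (\ref{eq:separation})), it suffices to check that each $\beta_P$ for $P\in\Omega$ is piecewise linear with integer slopes, and that integrality of slopes is preserved under the Minkowski operation.

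Next I would run through the two types of basis elements catalogued above. For a basis element $M(\Sigma) = H + \sum_{i=1}^s (N_i\cdot H)N_i$ coming from a chamber $\Sigma$, we showed $\Delta(M(\Sigma)) = \Delta(\lambda,1)$, the simplex of height $\lambda = H\cdot M(\Sigma)$ and length $1$; here $\lambda\in\Z$ because $H$ and the $N_i$ are integral divisors and the intersection form is integral, so $\beta_{M(\Sigma)}$ is linear with slope $\lambda\in\Z$. For the remaining basis elements, which are curves $E$ with $E^2 = 0$, we have $\mu_H(E) = 0$ and $\Delta(E)$ is the vertical segment of length $H\cdot E\in\Z$; here $\beta_E$ is defined only at $x=0$, so it is (vacuously) piecewise linear with integer slope on each of its (zero) linear pieces. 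Thus every $\Delta(P)$, $P\in\Omega$, has $\beta_P$ piecewise linear with integer slopes.

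Then I would assemble the decomposition. When we form $\tau\,\Delta(M) + \Delta(D')$ with $\tau\in\Q_{\ge 0}$, the scaling $\tau\,\Delta(M)$ has boundary function $x\mapsto \tau\,\beta_M(x/\tau)$, which in general has slope $\lambda$ (still an integer, independent of $\tau$) on its linear piece; and, as recorded in the proof of (\ref{eq:separation}), the boundary function of the sum is $\beta_M$-type plus a constant on $[0,\mu_C(\tau M)]$ and a horizontal translate of $\beta_{D'}$ on $[\mu_C(\tau M),\mu_C(D)]$. Both operations — adding a constant, translating horizontally — preserve the property of being piecewise linear with integer slopes, and the simplex contributions only ever add the integer slope $\lambda$ on an initial segment. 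Iterating the algorithm of the theorem, which terminates after at most $\rho$ steps, we conclude that $\beta_D$ is a concatenation of linear pieces each of integer slope, which is exactly the assertion.

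The only point requiring a little care — and the place I would expect the main (minor) obstacle — is the bookkeeping of slopes under the Minkowski operation: one must check that in the identity for $\beta_D(x)$ from the proof of the theorem the slopes genuinely add as integers rather than merely stay rational, i.e. that the scaling factors $\tau$ (which are rational) do not spoil integrality. This works out because a simplex of height $h$ and length $\ell$, when rescaled by $\tau$, becomes a simplex of height $\tau h$ and length $\tau\ell$, whose boundary slope is $h$ — unchanged, and integral whenever $h$ is — so the rational scalars cancel in exactly the right way. Beyond that observation, the argument is a routine unwinding of the main theorem and the explicit basis computation already carried out for $X_r$.
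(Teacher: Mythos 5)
Your proposal is correct and follows essentially the same route as the paper, which deduces the proposition directly from the main theorem together with the computation that every chamber basis element on $X_r$ has Okounkov body $\Delta(\lambda,1)$ with $\lambda=H\cdot M(\Sigma)\in\Z$ and the remaining basis elements give vertical segments of integer length, scaling by rational $\tau$ leaving the (integer) slope unchanged. Your extra bookkeeping on how slopes behave under the concatenation identity for $\beta_D$ and under passage from a big divisor to its positive part via Remark \ref{rmk:nefness} just makes explicit what the paper leaves as a \emph{direct consequence}.
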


For a concrete calculation, consider the del Pezzo surface $X_6$. Up to 
permutation of the $E_i$, we have the possible supports for Zariski chambers 
with corresponding basis elements displayed in Table \ref{chambers} in the 
standard basis $H,E_1,\dots,E_r$, with $L_{i,j},C_1,C_2$ denoting the $(-1)$-curves 
coming from lines and conics, respectively.

\renewcommand{\arraystretch}{1.5}
\newcolumntype{C}[1]{>{\centering\arraybackslash}p{#1}}
\begin{table}[ht]\centering\footnotesize
      \begin{tabular}[t]{c|C{60mm}}\tline
         $\Neg(\Sigma) $ &  $M(\Sigma)$   \\\tline
         $E_1,\dots,E_s$ 		&  $H$          	  \\
         
         $L_{1,2},\dots,L_{1,1+s},E_{s+1},\dots,E_{s+t}$ &   
			$(s+1)H-sE_1-E_2-\dots-E_s$   \\
         
         $L_{1,2},L_{1,3},L_{2,3},E_{4},\dots,E_{4+t}$ &   
			$4H-2E_1-2E_2-2E_3$   \\
         
         $C_1,L_{2,3},\dots,L_{2,2+s},E_{s+1},\dots,E_{s+t}$ &  
         $(5+s)H-(2+s)E_2-3E_3-\dots-3E_s$ $-2E_{s+1}-\dots-2E_6$\\
         
         $C_1,L_{2,3},L_{2,4},L_{3,4},E_{1}$ &  
         $8H-4E_2-4E_3-4E_4$ $-2E_5-\dots-2E_6$         	\\
         
         {$C_1,C_2, L_{3,4},\dots,L_{3,3+s}$ }&  
			{$(9+s)H-2E_1-2E_2-{(s+4)}E_3$  
			$-5E_4-\dots-5E_{3+s}-4E_{4+s}-\ldots-4E_r$ }\\
         
         {$C_1,C_2, L_{3,4},L_{3,5},L_{4,5}$ }&  
			{$12H-2E_1-2E_2-6E_3-6E_4-6E_5-4E_6$ }    
			\\\tline
      \end{tabular}
      \caption{Zariski chambers and corresponding Minkowski basis elements 
	  on $X_6$\label{chambers}}
\end{table}

The additional Minkowski basis elements (corresponding to non-big nef classes) 
are the strict transforms of 
\begin{itemize}
	\item lines through one of the $p_i$,
	\item irreducible conics through four of the $p_i$.
\end{itemize}
We thus get the following elementary bodies as building blocks for the
Okounkov body of any big divisor on $X_6$:
$$
	\Delta(1,1),\ldots,\ \Delta(12,1),\ \Delta(1,0),\ \Delta(2,0).
$$

\begin{example}\label{ex:not unique}
	\emph
	{Consider the divisor $D=7H-2E_1-E_2-3E_3-2E_4-2E_5$ on $X_6$.
	\begin{itemize}
	\item
		For $D_1=D=7H-2E_1-E_2-3E_3-2E_4-2E_5$, we find $\Null(D)=\set{E_6}$, 
		so $M(D)=H$. With $\tau=2$ we get
		$D_2=5H-2E_1-E_2-3E_3-2E_4-2E_5$.
	\item
		Now, $\Null(D_2)=\set{C_6,L_{1,3},L_{3,4},L_{3,5},E_6}$, so
		 $M(D)=8H-3E_1-2E_2-5E_3-3E_4-3E_5$. With
		 $\tau=\tfrac12$ we get
		$D_3=H-\tfrac12E_1-\tfrac12E_3-\tfrac12E_4-\tfrac12E_5$.
	\item
		then $D_3^2=0$, so we are done.
	\end{itemize}
	Cosequently, the Okounkov body of $D$ is given as the Minkowski sum
	$$
		\Delta(D)=\Delta(2,2)+\tfrac12\Delta(8,1)+\Delta(1,0)
	$$			
	depicted in Figure \ref{fig:3}}.
\begin{figure}[ht]
\centering
\unitlength 1mm % = 2.845pt
\linethickness{0.4pt}
\ifx\plotpoint\undefined\newsavebox{\plotpoint}\fi 
\begin{picture}(50,88)(0,0)
\includegraphics[scale=0.7, clip, trim=20 30 20 38]{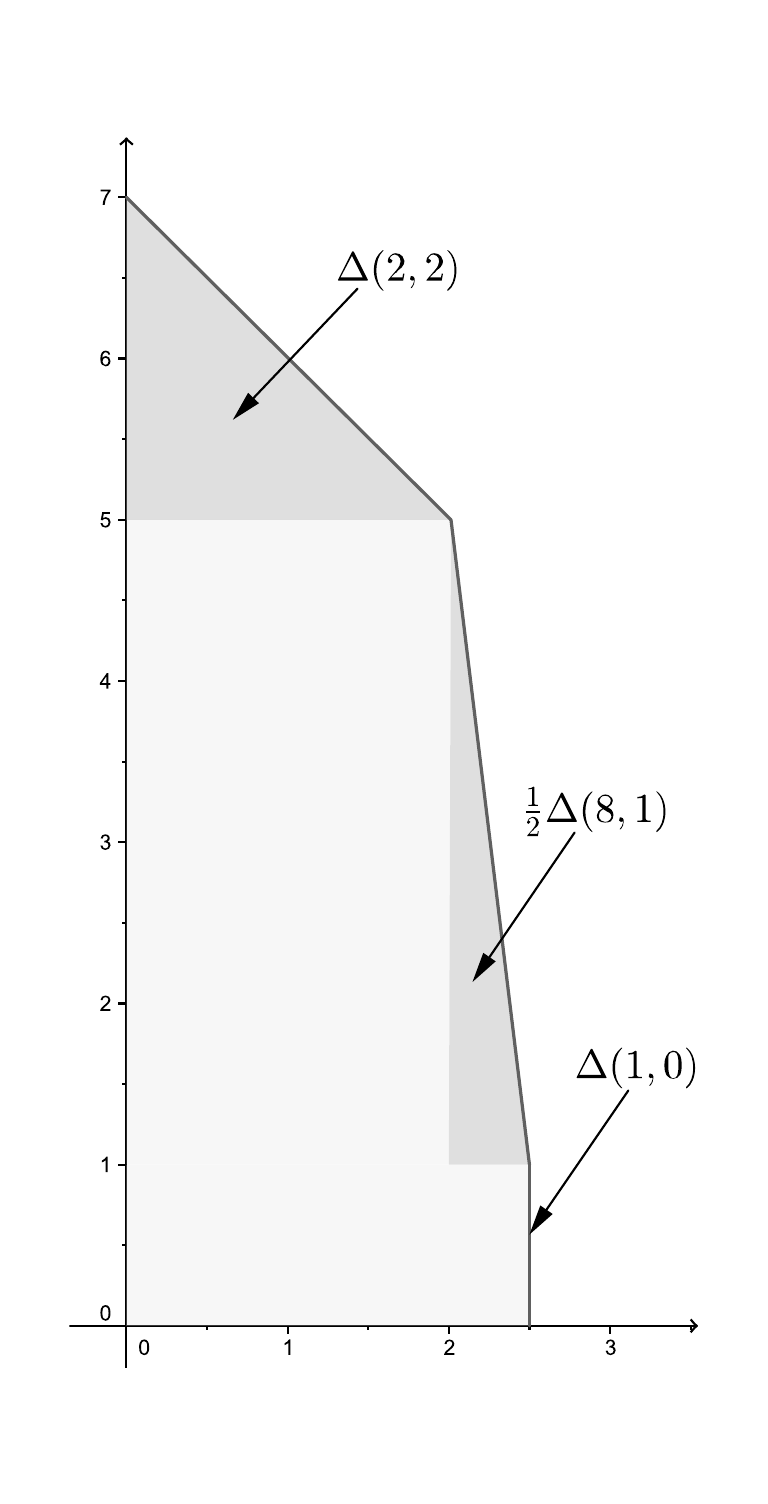}
\end{picture}
\caption{The Okounkov body $\Delta(D)$ as a Minkowski sum\label{fig:3}}
\end{figure}

\emph{
	Note on the other hand that we have the identity
	$$
		D=(3H-2E_1-E_2-E_3) + (4H-2E_3-2E_4-2E_5)
	$$
	and both summands are Minkowski basis elements. Clearly, this 
	representation cannot be a Minkowski decomposition (see Figures 
	\ref{fig:4} and \ref{fig:5}). 
	}
	\begin{figure}
	\begin{minipage}[hbt]{7cm}
	\centering
		\includegraphics[scale=4, clip, trim=20 45 20 45]{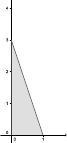}
		\caption{$\Delta(3H-2E_1-E_2-E_3)$\label{fig:4}}
\end{minipage}
\hfill
\begin{minipage}[hbt]{7.5cm}
	\centering
		\includegraphics[scale=4, clip, trim=20 45 20 45]{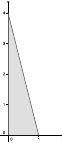}
		\caption{$\Delta(4H-2E_3-2E_4-2E_5)$\label{fig:5}}
\end{minipage}
\end{figure}

\end{example}

%*****************************************************************************
%***** Non-del-Pezzo examples ************************************************
%*****************************************************************************

\section{Non-del-Pezzo examples}\label{sec:examples}
\begin{enumerate}
\item
For a simple non-del-Pezzo example, let $\pi:X\to\P^2$ be the blow-up of 3 
points on a line with exceptional divisors $E_1,E_2,E_3$. Choose $C$ general 
in the class $H:=\pi^\ast(\mathcal O_{\P^2}(1))$ and $p\in C$ a general point.
This gives a flag as above. The pseudo-effective cone is spanned by the 
exceptional divisors together with the class $D:=H-E_1-E_2-E_3$ of the strict 
transform of the line joining the blown up points. We have $12$ Zariski 
chambers: the nef chamber,  the $7$ chambers belonging to principal 
submatrices of the intersection matrix of $E_1+E_2+E_3$, the one corresponding
to $D$,and three chambers with support $D$ together with one of the 
exceptional divisors. The corresponding Minkowski basis element is $H$ for the
first $8$ chambers, $3H-E_1-E_2-E_3$ for the 9th, and $2H-E_i-E_j$ for last 
three. The remaining elements of $\Omega$ are $H-E_1, H-E_2, H-E_3$. Let's 
calculate the decomposition for the arbitrarily chosen divisor 
$P=15H-3E_1-3E_2-E_3$. 
\begin{itemize}
\item The divisor $P$ is ample, so $M=H$; with $\tau=8$ we get 
	$P_1=7H-3E_1-3E_2-E_3$.
\item Now, $\Null(P_1)=D$, so $M_\Sigma=3H-E_1-E_2-E_3$; with $\tau=1$ we get
	$P_2=4H-2E_1-2E_2$.
\item In the next step, $\Null(P_2)=\set{D,E_3}$, so $M_\Sigma=2H-E_1-E_2$; 
	with $\tau=2$, we get $P_3=0$, and we are done.
\end{itemize}
Thus we get the decomposition
$$
	P=8\cdot H + (3H-E_1-E_2-E_3) + 2\cdot(2H-E_1-E_2) 
$$
with corresponding Minkowski decomposition of the Okounkov body
\be
	\Delta(P) &=& 8\Delta(H) + \Delta(3H-E_1-E_2-E_3) + 2\Delta(2H-E_1-E_2)\\
						&=& \Delta(8,8)+\Delta(3,1)+\Delta(4,2).\\
\ee

\item (K3-surface)\\
For an example of a surface which is not a blow-up of $\P^2$ let us consider a
K3-surface. As Kov\'acs proves in \cite{Kov}, for any $1\le \rho\le 19$ there
exists a K3-surface $X$ with Picard number $\rho$ whose pseudo-effective cone 
is rational polyhedral, spanned by the classes of finitely many rational 
$(-2)$-curves. We consider a certain K3-surface of this type: It was proved in
\cite[Proposition 3.3]{BF} that there exists a K3-surface $X$ with Picard 
number $3$ such that the pseudo-effective cone is spanned by three 
$(-2)$-curves $L_1,L_2,D$ forming a hyperplane section $L_1+L_2+D$ such that
$L_1$ and $L_2$ are lines and $D$ is an irreducible conic. The hyperplane 
section $L_1+L_2+D$ has intersection matrix
$$
	\begin{pmatrix} -2 & 1 & 2\\
			1& -2 & 2\\
			2&2&-2

\end{pmatrix}.
$$
Therefore, the Zariski chamber decomposition consists of five chambers, namely
the nef chamber, one chamber corresponding to each of the $(-2)$-curves 
$D,L_1,L_2$, and one chamber with support $L_1+L_2$. Pick $C$ to be an 
irreducible curve with class $L_1+L_2+D$, i.e., a general hyperplane section, 
and $p$ to be a point in $C$ not on $L_1,L_2$, and $D$. Then the Minkowski 
basis elements corresponding to the above list of chambers are $C$, 
$3L_1+2L_2+2D$, $2L_1+3L_2+2D$, $L_1+L_2+2D$, and $2L_1+2L_2+D$. In addition, 
the Minkowski basis $\Omega$ contains the curves $L_1+D$ and $L_2+D$ of 
self-intersection zero. Thus, by the theorem, the building blocks of Okounkov 
bodies of nef divisors on $X$ are
$$
	\Delta(4,1),\Delta(9,2),\Delta(6,1),\Delta(3,0).
$$
In particular, in contrast to the del Pezzo case, the slope of a linear piece 
of the bounding function $\beta$ need not be integral for K3-surfaces.

\end{enumerate}

%*****************************************************************************
%***** Bibliography **********************************************************
%*****************************************************************************

%*****************************************************************************
	
	\bigskip
	\small
	Patrycja \L{}uszcz-\'Swidecka
	Institute of Mathematics,
	Jagiellonian University, 
	ul.~\L ojasiewicza 6, 30-438 Krak\'{o}w, Poland.

	\nopagebreak
	\textit{E-mail address:} \texttt{paczic@gmail.com}

	\bigskip
	David Schmitz,
	Fach\-be\-reich Ma\-the\-ma\-tik und In\-for\-ma\-tik,
	Philipps-Uni\-ver\-si\-t\"at Mar\-burg,
	Hans-Meer\-wein-Stra{\ss}e,
	D-35032~Mar\-burg, Germany.

	\nopagebreak
	\textit{E-mail address:} \texttt{schmitzd@mathematik.uni-marburg.de}

\end{document}